\documentclass{article}
\usepackage{amsmath}
\usepackage{amssymb}
\usepackage{amsthm}
\usepackage{tikz-cd}
\usepackage{rotating}
\usepackage[colorlinks=true,linkcolor=blue,citecolor=blue]{hyperref}

\numberwithin{equation}{section}

\newtheoremstyle{sltheorem}%
  {5pt}
  {1pt}
  {\slshape}
  {}
  {\bfseries}
  {.}
  {.5em}
  {}

\theoremstyle{sltheorem}
  \newtheorem{THM}{Theorem}[section]
  \newtheorem{LEM}[THM]{Lemma}
  
  \newtheorem{COR}[THM]{Corollary}

\theoremstyle{definition}
  \newtheorem{DEF}[THM]{Definition}

\newcommand{\Emb}{\mathrm{Emb}}
\newcommand{\Ob}{\mathrm{Ob}}
\newcommand{\ord}{\mathrm{ord}}
\newcommand{\id}{\mathrm{id}}
\newcommand{\Fraisse}{Fra\"\i ss\'e}

\let\le\leqslant
\let\ge\geqslant

\newcommand{\AAA}{\mathbf{A}}
\newcommand{\BB}{\mathbf{B}}
\newcommand{\CC}{\mathbf{C}}
\newcommand{\DD}{\mathbf{D}}

\newcommand{\0}{\emptyset}

\newcommand{\calA}{A}
\newcommand{\calB}{B}
\newcommand{\calC}{C}
\newcommand{\calD}{D}
\newcommand{\calG}{G}
\newcommand{\calH}{H}
\newcommand{\calR}{R}
\newcommand{\calS}{S}
\newcommand{\calU}{U}
\newcommand{\calV}{V}
\newcommand{\calW}{W}

\newcommand{\NN}{\mathbb{N}}

\let\phi\varphi

\title{Hall's universal group does not have finite big Ramsey degrees}
\author{Dragan Ma\v sulovi\'c\\
Department of Mathematics and Informatics\\
Faculty of Sciences, University of Novi Sad, Serbia\\
email: \texttt{dragan.masulovic@dmi.uns.ac.rs}\\
\and
Veljko Tolji\'c\\
Freie Universit\"at Berlin, Germany\\
email: \texttt{veljko.toljic@dmi.uns.ac.rs}
}

\date{\today}

\begin{document}

\maketitle

\begin{center}
  \it Dedicated to Jaroslav Ne\v set\v ril\\
  \it on the occasion of his 80th birthday
\end{center}

\begin{abstract}
  In this paper we show that the Hall's universal group does not have finite big Ramsey degrees.
  Our strategy consists of piggybacking on the recent result of
  Hubi\v cka, Kone\v cn\'y, Todor\v cevi\'c and Zucker (announced at EUROCOMB~2025)
  that the random edge-labelled graph (the \Fraisse\ limit of the class of all finite complete edge-labelled graphs
  where the set of labels is countably infinite) does not have finite big Ramsey degrees.
  We then use the machinery of category theory to transport this result from the context of
  edge-labelled graphs to the context of groups. The main step in the process is the construction of a
  functor from the category of edge-labelled graphs and embeddings to the category of groups
  and group embeddings which takes finite graphs to finite groups. This makes is possible for us
  to build a subgroup of the Hall's universal group which encodes the random edge-labelled graph.
  
  \textbf{Key words and phrases:} Hall's universal group, big Ramsey degrees, edge-labelled graphs
\end{abstract}

\section{Introduction}

In this paper we show that the Hall's universal group does not have finite big Ramsey degrees.
Our strategy consists of piggybacking on the recent result of
Hubi\v cka, Kone\v cn\'y, Todor\v cevi\'c and Zucker~\cite{hubicka-omega-e-lab-hypergr}
that the random edge-labelled graph (the \Fraisse\ limit of the class of all finite complete edge-labelled graphs
where the set of labels is countably infinite) does not have finite big Ramsey degrees.
We then use the machinery of category theory to transport this result from the context of
edge-labelled graphs to the context of groups.

In Section~\ref{hall.sec.prelim} we review some basic facts about the Hall's universal group,
big Ramsey degrees, and outline several convenient categorical tools to transport the
results about big Ramsey degrees from one category to the other.
In Section~\ref{hall.sec.ftr} we construct a functor from the category of edge-labelled graphs
and embeddings to the category of groups and group embeddings which takes finite edge-labelled-graphs to finite groups.
This makes is possible for us to build a subgroup of the Hall's universal group which encodes the random edge-labelled graph.
As a spin-off, we get a very natural functor from the category of simple graphs and graph embeddings into
the category of groups and group embeddings that takes finite graphs to finite groups (cf.~\cite{HeQiao} where
one can find an overview of attempts to construct such a functor for the category of graphs and graph homomorphisms).
Finally, in Section~\ref{hall.sec.brd} we prove that several families of finite groups do not have finite big Ramsey
degrees in the Hall's universal group. We close the paper with an open problem.

\section{Preliminaries}
\label{hall.sec.prelim}

\paragraph{Hall's universal group.}
Let $\calH$ denote the \emph{Hall's universal group}, the \Fraisse\ limit
of the class of all finite groups. 
Hall's universal group is locally finite, and embeds every finite or
countably infinite locally finite group. Moreover, $\calH$ is ultrahomogeneous:
every isomorphism between two finite subgroups of $\calH$ extends to an automorphism of~$\calH$.
Consequently, $\calH$ has the \emph{extension property}: for every pair of finite groups $\calA$ and $\calB$, every embedding
$f : \calA \hookrightarrow \calB$ and every embedding $g : \calA \hookrightarrow \calH$ there is an embedding
$g' : \calB \hookrightarrow \calH$ such that $g' \circ f = g$:
\begin{center}
  \begin{tikzcd}
    \calB \arrow[dr, hookrightarrow, dashed, "g'"] & \\
    \calA \arrow[u, hookrightarrow, "f"] \arrow[r, hookrightarrow, "g"'] & \calH
  \end{tikzcd}
\end{center}

By $\langle a_1, \ldots, a_n\rangle_\calG$ we denote the subgroup of $\calG$ generated by 
$a_1, \ldots, a_n \in G$ and by $\ord_\calG(a)$ we denote the order of $a \in G$.

\paragraph{Big Ramsey degrees.}
Big Ramsey degrees were first introduced in the context of structural Ramsey theory in~\cite{KPT}.
For first-order structures $\calA$ and $\calB$ with the same signature, let $\Emb(\calA, \calB)$ denote
the set of all embeddings $\calA \hookrightarrow \calB$.

Let $\calA$ and $\calS$ be first-order structures such that $\calA$ is finite and $\calS$ is locally finite.
A \emph{big Ramsey degree of $\calA$  in $\calS$} is the least positive integer $n \in \NN$, if such an integer exists,
with the property that for every $k \in \NN$ and every coloring
$\chi : \Emb(\calA, \calS) \to k$ one can find a $w \in \Emb(\calS, \calS)$ satisfying $|\chi(w \circ \Emb(\calA, \calS))| \le n$.
We then write $T(\calA, \calS) = n$. If no such $n \in \NN$ exists we write $T(\calA, \calS) = \infty$.
We say that \emph{$\calS$ has finite big Ramsey degrees} if $T(\calA, \calS) < \infty$
for every finitely generated substructure $\calA$ of~$\calS$.

\paragraph{Edge-labelled graphs.}
Let $L$ be a countably infinite set of labels.
An $L$-edge-labelled graph
is a structure of the form $(V, E, \ell)$
where $V$ is the nonempty set of vertices of the graph,
$E \subseteq \binom V2$ is its set of edges, and
and $\ell : E \to L$ is the labelling function.
A complete $L$-edge-labelled graph
is an $L$-edge-labelled graph with $E = \binom V2$.
For complete $L$-edge-labelled graphs,
instead of $(V, \binom V2, \ell)$, we simply write $(V, \ell)$.
We shall usually identify a complete $L$-edge-labelled graphs with its vertex set $V$ and write
$\ell^V$ for its labelling function.

A function $f : V \to W$ is an \emph{embedding} of complete $L$-edge-labelled graphs if
$\ell^V(\{u, v\}) = \ell^W(\{f(u), f(v)\})$ for all $\{u, v\} \in \binom V2$.
For every at most countable set of labels $L$, the class of all finite complete $L$-edge-labelled graphs is a
\Fraisse\ class. Let $\calR_L$ denote its \Fraisse\ limit.

\begin{THM} (cf.~\cite[Theorem 1.1]{hubicka-omega-e-lab-hypergr})\label{hall.thm.hubicka-RL}
  Let $L$ be a countably infinite set of labels, and let $e$ be any complete $L$-edge-labeled graph on
  two vertices. Then $e$ does not have finite big Ramsey degree in $\calR_L$.
\end{THM}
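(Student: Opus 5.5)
This is a cited result (Hubička, Konečný, Todorčević, Zucker), and the plan is to reproduce its core argument. Fix the edge $e$ with label $a \in L$. The goal is: for every $n$, a colouring $\chi : \Emb(e, \calR_L) \to k$ such that every $w \in \Emb(\calR_L, \calR_L)$ sees at least $n$ colours on $w \circ \Emb(e, \calR_L)$. Since $e$ is a single edge, this amounts to colouring the $a$-labelled edges of $\calR_L$ (with both orientations) so that every copy of $\calR_L$ inside $\calR_L$ realises at least $n$ colours on its $a$-edges. The idea is to exploit the infinitely many labels, which $\calR_L$ must "remember" under any self-embedding.

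Enumerate the vertices of $\calR_L$ as $v_0, v_1, \ldots$ and the labels as $\ell_0 = a, \ell_1, \ell_2, \ldots$. For an $a$-edge $\{v_i, v_j\}$ with $i<j$, look at the labels $\ell^{\calR_L}(\{v_m, v_i\})$ and $\ell^{\calR_L}(\{v_m, v_j\})$ for $m < i$. These are the "types" of $v_i$ and $v_j$ over the initial segment, as in Devlin-type coding-tree arguments. Because the label set is infinite, the tree of 1-types over initial segments is infinitely branching. I would define the colour of the edge from a combinatorial invariant of where $v_i$ and $v_j$ split in this tree, for example the index of the label at the meet level, or its position in a fixed finite partition of labels.

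The key step is to show that this invariant cannot be tamed by passing to a copy. Given any $w$, the image $w[\calR_L]$ is again an infinitely-branching copy, so its coding tree has splitting nodes with arbitrarily many distinct label-successors. A pigeonhole/Ramsey argument (Milliken-style, for the infinitely branching tree) should then force $a$-edges of $w[\calR_L]$ to realise at least $n$ distinct invariant values. For instance, I would take $n$ vertices of $\calR_L$ whose labels to a common vertex are pairwise distinct and arrange $a$-edges at the corresponding splits; their images under $w$ inherit pairwise distinct labels.

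The main obstacle is making the invariant unavoidable. An embedding $w$ can move the relevant vertices to arbitrary positions in the enumeration, so the "meet level" labels in the image need not be the original labels. The colouring must therefore depend on relative ordering data, namely which of the two endpoints comes first and how the first distinguishing labels compare, rather than on absolute labels. I would first try using a bijection $L \cong \NN$ and colouring by the comparison pattern of the first-difference labels, and then prove unboundedness by a diagonal argument. If that fails, I would simply invoke \cite[Theorem 1.1]{hubicka-omega-e-lab-hypergr}, as the paper does.
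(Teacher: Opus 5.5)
The paper does not prove this statement. It imports it verbatim from Theorem~1.1 of Hubi\v cka, Kone\v cn\'y, Todor\v cevi\'c and Zucker and uses it only as a black box in Lemma~\ref{hall.lem.edge-inf}. Your fallback of citing it is therefore exactly the paper's treatment.

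Your attempted reconstruction, however, has a genuine gap. You never fix a colouring, and the unavoidability step is only asserted (``should then force''). Worse, every candidate you name depends only on the splitting data of the pair:
the meet level, the labels at the splitting node, their class in a finite partition of $L$, their comparison under a bijection $L\cong\NN$, and which endpoint comes first. Any finite colouring of that kind takes at most two values (one per orientation of $e$) on a suitable copy. Here is why:
\begin{itemize}
\item Order $L$ via a bijection with $\NN$. By Ramsey's theorem, find an infinite $P=\{p_0<p_1<\cdots\}\subseteq L$ with the following property: for each orientation, the colour of an $a$-edge that splits at level $0$ with labels $p<q$ (the smaller one on the earlier endpoint) is the same for all $p<q$ in $P$.
\item Using the extension property of $\calR_L$, choose $w(v_t)$ inductively as a vertex of index $N_t$, with $1\le N_0<N_1<\cdots$, such that $\ell(\{v_0,w(v_t)\})=p_t$ and $\ell(\{w(v_s),w(v_t)\})=\ell(\{v_s,v_t\})$ for all $s<t$.
\item This $w$ is a self-embedding of $\calR_L$. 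Every pair of vertices of $w[\calR_L]$ splits at the root $v_0$, which is not in the copy, and the earlier endpoint always carries the smaller label. So the colouring is constant on each orientation.
\end{itemize}

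The same construction shows why your illustrative step fails. $w$ preserves labels between vertices of the copy. But the node where $w(x)$ and $w(y)$ split is in general an ambient vertex outside the copy, and the embedding chooses its labels to $w(x)$ and $w(y)$ freely. The ``inherited'' distinct labels are not the ones your colouring reads.

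What is missing is twofold:
\begin{enumerate}
\item an invariant of an $a$-edge that is \emph{not} determined by its splitting data;
\item for each $n$, a finite colouring built from that invariant, together with a proof that every copy of $\calR_L$ realises at least $n$ of its colours.
\end{enumerate}
Appealing to a Milliken-style theorem cannot supply the second ingredient. Such results are tools for the opposite inequality: they produce copies on which a colouring takes few values. Since neither ingredient is in your sketch, only the citation actually carries the proof.
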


\paragraph{Categories.}
The notion of Ramsey degrees generalizes to locally small categories straightforwardly.
The benefit of moving to this level of generality comes from the fact that we can then use
the machinery of category theory to transport the results about Ramsey degrees from one context (category) to
another.

Let $\CC$ be a locally small category. For $A, S \in \Ob(\CC)$,
a \emph{big Ramsey degree of $A$  in $S$} is the least positive integer $n \in \NN$, if such an integer exists,
with the property that for every $k \in \NN$ and every coloring
$\chi : \hom(A, S) \to k$ one can find a $w \in \hom(S, S)$ satisfying $|\chi(w \cdot \hom(A, S))| \le n$.
We then write $T_\CC(A, S) = n$. If no such $n \in \NN$ exists we write $T_\CC(A, S) = \infty$.

Consider a finite, acyclic, bipartite digraph with loops 
where all the arrows go from one class of vertices into the other
and the out-degree of all the vertices in the first class is~2 (modulo loops):
\begin{center}
  \begin{tikzcd}
    {\bullet} \arrow[loop above] & {\bullet} \arrow[loop above] & {\bullet} \arrow[loop above] & \ldots & {\bullet} \arrow[loop above] \\
    {\bullet} \arrow[loop below] \arrow[u] \arrow[ur] & {\bullet} \arrow[loop below] \arrow[ur] \arrow[ul] & {\bullet} \arrow[loop below] \arrow[u] \arrow[ur] & \ldots & {\bullet} \arrow[loop below] \arrow[u] \arrow[ull]
  \end{tikzcd}
\end{center}
Such a digraph can be thought of as a category where the loops represent the identity morphisms, and will be referred to as
a \emph{binary category}. (Note that all the compositions in a binary category
are trivial since no nonidentity morphisms are composable.)

An \emph{amalgamation problem} in a category $\CC$ is a diagram $F : \Delta \to \CC$ where $\Delta$ is a binary category,
$F$ takes the top row of $\Delta$ to the same object, and takes the bottom of $\Delta$ to the same object,
see Fig.~\ref{nrt.fig.2}. If $F$ takes the bottom row of $\Delta$ to an object $A$ and the top
row to an object $B$ then the diagram $F : \Delta \to \CC$ will be referred to as the \emph{$(A, B)$-diagram in $\CC$}.
An amalgamation problem $F : \Delta \to \CC$ \emph{has a solution in $\CC$} if $F$ has a compatible cocone
in $\CC$.

\begin{figure}
\centering
\begin{tikzcd}
  {\bullet} & {\bullet} & {\bullet}
  & & B & B & B
\\
  {\bullet} \arrow[u] \arrow[ur] & {\bullet} \arrow[ur] \arrow[ul] & {\bullet} \arrow[ul] \arrow[u]
  & & A \arrow[u] \arrow[ur] & A \arrow[ur] \arrow[ul] & A \arrow[ul] \arrow[u]
\\
  & \Delta \arrow[rrrr, "F"]  & & & & \CC  
\end{tikzcd}
\caption{An $(A,B)$-diagram in $\CC$ (of shape $\Delta$)}
\label{nrt.fig.2}
\end{figure}

\begin{THM}\label{bigrd.thm.1}\cite{masul-bigrd}
  Let $\BB$ and $\CC$ be locally finite categories whose every morphism is mono,
  and let $G : \BB \to \CC$ be a faithful functor.
  Let $B \in \Ob(\BB)$ be universal for $\BB$ and let $C \in \Ob(\CC)$ be universal for $G(\BB)$.
  Take any $A \in \Ob(\BB)$ and assume that for every $(A, B)$-diagram $F : \Delta \to \BB$ in $\BB$
  the following holds: if the amalgamation problem $GF : \Delta \to \CC$ has a solution in $\CC$ whose tip is $C$,
  then $F$ has a solution in~$\BB$. Then $T_\BB(A, B) \le T_\CC(G(A), C)$.
\end{THM}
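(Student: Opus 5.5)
The plan is to transfer colourings from $\hom_\BB(A,B)$ to $\hom_\CC(G(A),C)$ along a fixed copy of $G(B)$ inside $C$, apply the big Ramsey degree of $G(A)$ in $C$, and then pull the resulting $w \in \hom(C,C)$ back to a morphism $u \in \hom(B,B)$. The amalgamation hypothesis is exactly what makes the pull-back possible.

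\emph{Setup.} Since $C$ is universal for $G(\BB)$, fix a morphism $e : G(B) \to C$. We may assume $n = T_\CC(G(A), C) < \infty$, since otherwise there is nothing to prove. Let $\chi : \hom_\BB(A,B) \to k$ be a colouring.

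\emph{Transfer of the colouring.} Define $\chi' : \hom_\CC(G(A), C) \to k$ as follows. If $f = e \cdot G(g)$ for some $g \in \hom_\BB(A,B)$, put $\chi'(f) = \chi(g)$. This $g$ is unique: $e$ is mono, so $e \cdot G(g) = e \cdot G(g')$ gives $G(g) = G(g')$, and faithfulness of $G$ then gives $g = g'$. For all other $f$, let $\chi'(f)$ be an arbitrary fixed colour. By the choice of $n$ there is $w \in \hom(C,C)$ with $|\chi'(w \cdot \hom(G(A),C))| \le n$.

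\emph{Pull-back.} The aim is a $u \in \hom_\BB(B,B)$ such that for every $g \in \hom_\BB(A,B)$ the morphism $w \cdot e \cdot G(u \cdot g)$ is of the form $e \cdot G(g')$ with $\chi(g') = \chi(u \cdot g)$. The cleanest way to arrange this is to choose $u$ so that
\[
  w \cdot e \cdot G(u) = e \cdot G(v) \quad \text{for some } v \in \hom_\BB(B,B).
\]
The colours of $u \cdot \hom(A,B)$ are then read off from $\chi'$ on $w \cdot \hom(G(A),C)$. There are at most $n$ of them, which gives $T_\BB(A,B) \le n$.

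To produce such $u$, I would use the hypothesis on $(A,B)$-diagrams. Consider an $(A,B)$-diagram $F : \Delta \to \BB$ whose arrows pair each $g \in \hom(A,B)$ (or a relevant finite family of such $g$) with a candidate $g' \in \hom(A,B)$ satisfying $w \cdot e \cdot G(g) = e \cdot G(g')$. The images of the two copies of $B$ under $w \cdot e$ and $e$ form a compatible cocone for $GF$ with tip $C$. The hypothesis then yields a solution of $F$ in $\BB$, that is, morphisms out of the two copies of $B$ that agree on the shared copies of $A$. Local finiteness of $\BB$ and $\CC$ is what lets $\hom(A,B)$ be handled through finite pieces. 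I would enumerate $B$ as an increasing union of finite substructures and construct $u$ and $v$ step by step, solving the corresponding finite diagrams at each stage. A compactness (K\"onig-type) argument over the finitely many choices at each level then glues these into global morphisms.

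\emph{Main obstacle.} I expect the hardest step to be this last one: turning the cocone solutions given by the hypothesis into a single $u \in \hom(B,B)$, not just coherent finite approximations. In particular one must ensure that morphisms in $w \cdot e \cdot G(\hom(A,B))$ not of the form $e \cdot G(g')$ carry no colour from the $u$-copy. My first attempt would be to build the $(A,B)$-diagram from all of $\hom(A,B)$ simultaneously. The cocone is then supplied directly by $w \cdot e$ and $e$, and the solution in $\BB$ can be composed with universality of $B$ to land back in $B$.
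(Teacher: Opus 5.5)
\textbf{The pull-back step fails.} The problem is already built into your $\chi'$. You transfer $\chi$ along the single fixed copy $e$ and give every other element of $\hom_\CC(G(A),C)$ one fixed colour. The big Ramsey property of $C$ gives you no control over where the witness $w$ sends $C$. If the image of $w$ avoids $e(G(B))$, then $w\cdot\hom(G(A),C)$ is monochromatic for $\chi'$, so $w$ is a legitimate witness that carries no information about $\chi$.

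Here is a concrete case. Take $\BB=\CC$ the category of countable chains and embeddings, $G=\id$, $B=C=\mathbb{Q}$, $e$ onto the negative rationals and $w$ onto the positive ones. Then no $u,v$ satisfy $w\cdot e\cdot G(u)=e\cdot G(v)$.

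The two-copy diagram in your last paragraph does not repair this. A $\BB$-solution of it only gives $u_1\cdot g=u_2\cdot g'$ whenever $w\cdot e\cdot G(g)=e\cdot G(g')$. That is a statement about the (possibly empty) overlap of two copies, and it is unrelated to $\chi$. There are two smaller slips as well. Even when your identity holds, it is $v\cdot\hom(A,B)$ whose colours are controlled, not $u\cdot\hom(A,B)$. And the K\"onig argument has no finite branching, since $\hom(B_n,B)$ is infinite.

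\textbf{How to repair it.} The amalgamation hypothesis must be used \emph{before} $w$ is chosen, to transfer $\chi$ coherently to every copy of $G(A)$ that lies in some copy of $G(B)$ inside $C$.

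Take the amalgamation problem with one copy of $B$ for each $h\in\hom(G(B),C)$, and one copy of $A$ for each coincidence $h\cdot G(g)=h'\cdot G(g')$ with $h\ne h'$. The morphisms $h$ themselves solve its $G$-image with tip $C$.

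A solution in $\BB$, composed with a morphism from its tip into $B$ (by universality of $B$), gives $u_h\in\hom(B,B)$ with $u_h\cdot g=u_{h'}\cdot g'$ at every coincidence. So $\Phi(h\cdot G(g))=u_h\cdot g$ is well defined; for $h=h'$, use that $h$ is mono and $G$ is faithful.

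Now colour by $\chi'=\chi\circ\Phi$ on the domain of $\Phi$. For the resulting $w$, put $u=u_{w\cdot e}$. Then $\chi(u\cdot g)=\chi'(w\cdot e\cdot G(g))$ for all $g$, so $|\chi(u\cdot\hom(A,B))|\le t$, with no pull-back needed. This is exactly $(A,B)_\BB\prec(G(A),C)_\CC$, after which Theorem~\ref{hall.thm.prec-T} applies.

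This family of copies is infinite. So you still have to justify that the hypothesis yields one coherent family $(u_h)$ for all of it, not merely for finite subfamilies. That coherence, not a pull-back of $w$, is what your argument has to establish.
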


\begin{DEF} \cite{mas-tolj-order}
  Let $\AAA$ and $\BB$ be locally small categories. For $A, X \in \Ob(\AAA)$ and $B, Y \in \Ob(\BB)$
  we write $(A, X)_\AAA \prec (B, Y)_\BB$ to denote that there is an $M \subseteq \hom(B, Y)$ and a set-function
  $\phi : M \to \hom(A, X)$ such that for every $h \in \hom(Y, Y)$ one can find a $g \in \hom(X, X)$ satisfying:
  $$
    g \cdot \hom(A, X) \subseteq \phi(M \cap h \cdot \hom(B, Y)).
  $$
  (Note that $\cdot$ takes precedence over $\cap$, so $M \cap h \cdot \hom(B, Y)$ should be understood as $M \cap (h \cdot \hom(B, Y))$.)
\end{DEF} 


\begin{THM} \cite{mas-tolj-order} \label{hall.thm.prec-T}
    If $(A, X)_\AAA \prec (B, Y)_\BB$ then $T_\AAA(A, X) \le T_\BB(B, Y)$.
\end{THM}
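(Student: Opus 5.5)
Given $M \subseteq \hom(B, Y)$ and $\phi : M \to \hom(A, X)$ witnessing $(A, X)_\AAA \prec (B, Y)_\BB$, I would pull colorings of $\hom(A, X)$ back to colorings of $\hom(B, Y)$ and check that a good restriction on the $\BB$-side yields a good restriction on the $\AAA$-side. If $T_\BB(B, Y) = \infty$ there is nothing to prove, so assume $T_\BB(B, Y) = n < \infty$. Fix $k \in \NN$ and a coloring $\chi : \hom(A, X) \to k$.

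The first step is to define the pulled-back coloring $\chi' : \hom(B, Y) \to k+1$ by
$$
  \chi'(f) = \chi(\phi(f)) \text{ if } f \in M, \qquad \chi'(f) = k \text{ if } f \notin M.
$$
The extra color $k$ marks the morphisms outside $M$; if $M = \hom(B, Y)$ it is never used. Since $T_\BB(B, Y) = n$, there is an $h \in \hom(Y, Y)$ with $|\chi'(h \cdot \hom(B, Y))| \le n$.

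The second step is to apply the defining property of $\prec$ to this $h$. This gives a $g \in \hom(X, X)$ with $g \cdot \hom(A, X) \subseteq \phi(M \cap h \cdot \hom(B, Y))$. Take any $u \in g \cdot \hom(A, X)$. Then $u = \phi(f)$ for some $f \in M \cap h \cdot \hom(B, Y)$, so $\chi(u) = \chi(\phi(f)) = \chi'(f)$. Moreover $\chi'(f)$ is a color other than $k$, and it lies in $\chi'(h \cdot \hom(B, Y))$. Hence
$$
  \chi(g \cdot \hom(A, X)) \subseteq \chi'(h \cdot \hom(B, Y)) \setminus \{k\},
$$
which has at most $n$ elements. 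As $k$ and $\chi$ were arbitrary, $T_\AAA(A, X) \le n$.

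There is no real obstacle here. The only care needed is in handling morphisms outside $M$, which the dummy color does, and in using that $T_\BB(B, Y)$ bounds colorings with any finite number of colors, so $k+1$ colors are allowed. One could even drop the dummy color, because only colors of elements of $M$ ever enter the bound.
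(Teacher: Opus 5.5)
Your proof is correct: pulling $\chi$ back along $\phi$ (with a dummy, or indeed arbitrary, color off $M$), choosing $h$ from $T_\BB(B, Y) = n$, and taking the $\prec$-witness $g$ for this $h$ gives $\chi(g \cdot \hom(A, X)) \subseteq \chi'(M \cap h \cdot \hom(B, Y))$, a set with at most $n$ elements. The paper does not prove this statement itself (it only cites earlier work of the authors), and your pull-back-of-colorings argument is the natural proof of it.
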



\bigskip

Let $\CC$ be a category and $A, B, S \in \Ob(\CC)$. Then
$S$ \emph{is weakly homogeneous for $(A, B)$}, if there exist
$f \in \hom(A, B)$ and $g \in \hom(S, S)$ such that $g \cdot \hom(A, S) \subseteq \hom(B, S) \cdot f$.

\begin{THM}\label{rdbas.thm.mon-bigdeg}\cite{masul-rdbas}
  Let $\CC$ be a category whose morphisms are mono and let $A, B \in \Ob(\CC)$ be such that $A \to B$.
  Then $T(A, S) \le T(B, S)$ for every $S \in \Ob(\CC)$ which is weakly homogeneous for $(A, B)$.
\end{THM}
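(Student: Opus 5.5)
The plan is to transfer colorings of $\hom(A, S)$ to colorings of $\hom(B, S)$ by precomposing with the fixed morphism $f$ supplied by weak homogeneity. The morphism $g$ from weak homogeneity then lets us pull the resulting monochromatic-ish copy back to $A$. If $T(B, S) = \infty$ there is nothing to prove, so assume $T(B, S) = n \in \NN$.

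Fix $f \in \hom(A, B)$ and $g \in \hom(S, S)$ witnessing that $S$ is weakly homogeneous for $(A, B)$, so that $g \cdot \hom(A, S) \subseteq \hom(B, S) \cdot f$. Let $k \in \NN$ and let $\chi : \hom(A, S) \to k$ be an arbitrary coloring.

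\textbf{Step 1.} Define the induced coloring $\chi' : \hom(B, S) \to k$ by $\chi'(h) = \chi(h \cdot f)$.

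\textbf{Step 2.} By $T(B, S) = n$, choose $w \in \hom(S, S)$ with $|\chi'(w \cdot \hom(B, S))| \le n$.

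\textbf{Step 3.} Put $w' = w \cdot g \in \hom(S, S)$. We claim $|\chi(w' \cdot \hom(A, S))| \le n$. Take any $a \in \hom(A, S)$. Weak homogeneity gives some $h \in \hom(B, S)$ with $g \cdot a = h \cdot f$. Then
$$
  \chi(w' \cdot a) = \chi(w \cdot h \cdot f) = \chi'(w \cdot h) \in \chi'(w \cdot \hom(B, S)).
$$
Hence $\chi(w' \cdot \hom(A, S)) \subseteq \chi'(w \cdot \hom(B, S))$, a set of size at most $n$. Since $\chi$ and $k$ were arbitrary, $T(A, S) \le n$.

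There is essentially no obstacle here. The only point needing care is that the choice of $h$ in Step 3 need not be unique. This does not matter: we only need that \emph{some} $h$ exists for each $a$, and no map from $\hom(A,S)$ to $\hom(B,S)$ has to be well defined. The hypotheses that morphisms are mono and that $A \to B$ are not used beyond guaranteeing that $f$ exists. The existence of $f$ is already part of weak homogeneity.
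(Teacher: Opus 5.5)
Your proof is correct and is essentially the standard argument from the cited source; the paper itself only quotes the result without proof. You pull $\chi$ back to $\chi'(h) = \chi(h \cdot f)$ on $\hom(B,S)$, choose $w$ for $\chi'$, and use $w \cdot g$, which works because $w \cdot g \cdot \hom(A,S) \subseteq w \cdot \hom(B,S) \cdot f$.
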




\section{Representing complete edge-labelled graphs by groups}
\label{hall.sec.ftr}

Let $\NN = \{1, 2, \ldots\}$ denote the set of positive integers
and let $L \subseteq \NN$ be a nonempty subset of $\NN$.
To a complete $L$-edge-labelled graph $\calV$
we shall assign a group $\gamma(\calV)$
generated by its elements $\{[v] : v \in V\}$. Our encoding idea is to
encode the vertices of the graph by group elements of order~2,
and to encode the information that the edge $\{a, b\}$ is labelled by~$m \in L$
by the request that $\ord_{\gamma(\calV)}(ab) = 2m$.
Our encoding is, thus, based on the extensive use of dihedral groups.
For $n \in \NN$ let $\calD_n$ denote the dihedral group on $2n$ elements.
When we need to make the generators of $\calD_n$ explicit, we will write:
$$
  \calD^{\{a,b\}}_{n} = \langle a, b \mid a^2 = 1, b^2 = 1, (ab)^{n} = (ba)^n = 1 \rangle.
$$
For the main construction in this section we shall also rely on the fact that the
cyclic group $\calC_2$ is a dihedral group with a single generator,
so instead of $\calC_2$ we shall write
$$
  \calD^{\{a\}} = \langle a \mid a^2 = 1 \rangle.
$$

For a nonempty set $V$ let
$$
  I(V) = \{\{i, j\} : i, j \in V\}. 
$$
Note that $I(V)$ contains all singletons (in case $i = j$), and all unordered pairs of
elements of $V$. Therefore, whenever we take an arbitrary $\{i, j\} \in I(V)$ it may
happen that $i = j$.

Let $\calV$ be a complete $L$-edge-labelled graph.
We construct $\gamma(\calV)$ as follows.
Given $i \ne j \in V$, let $m_{ij} = \ell^V(\{i,j\})$.
We shall write $\calD^{\{i,j\}}$ instead of $\calD^{\{i,j\}}_{2m_{ij}}$
whenever $\calV$, an hence $m_{ij}$, can easily be deduced from the context.
Let
$$
  \Pi(\calV) = \prod\nolimits_{\{i,j\} \in I(V)} \calD^{\{i,j\}}.
$$
Just to fix terminology, each $x \in \Pi(\calV)$ is a function
$$
  x : I(V) \to \bigcup\nolimits_{\{i,j\} \in I(V)} \calD^{\{i,j\}}
$$
such that $x(\{i,j\}) \in \calD^{\{i,j\}}$ for all $\{i,j\} \in I(V)$.
As usual, the group operations are defined coordinatewise, e.g.:
$$
  (x\cdot y)(\{i,j\}) = x(\{i,j\}) \cdot y(\{i,j\}).
$$
For each $v \in V$ let $[v]$ denote the following element of $\Pi(\calV)$:
$$
  [v](\{i,j\}) = \begin{cases}
    v, & v \in \{i,j\}\\
    1, & \text{otherwise}.
  \end{cases}
$$
Finally, let $\gamma(\calV)$ be the subgroup of $\Pi(\calV)$
generated by $\{[v] : v \in V\}$.
Note that if $\calV$ is a finite complete edge-labelled graph then $\gamma(\calV)$ is also finite
(as a subgroup of a finite product of dihedral groups).

\begin{LEM}\label{hall.lem.orders}
  Let $L \subseteq \NN$ be a nonempty set of labels.
  Let $\calV$ be a complete $L$-edge-labelled graph, and
  let $\gamma(\calV)$ be the group constructed as above with the generating set
  $\{[v] : v \in V\}$ defined as above. Then, for all
  $u \ne v \in V$:
  
  $(a)$ $\ord_{\gamma(\calV)}([v]) = 2$; and
  
  $(b)$ $\ord_{\gamma(\calV)}([u][v]) = 2m_{uv}$.
\end{LEM}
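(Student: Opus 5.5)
The plan is to compute orders coordinatewise. In a direct product, the order of an element is the least common multiple of the orders of its coordinates; more precisely, $x^n = 1$ in $\Pi(\calV)$ if and only if $x(\{i,j\})^n = 1$ in $\calD^{\{i,j\}}$ for every $\{i,j\} \in I(V)$. This holds even when $V$ is infinite, provided every coordinate order is finite and the coordinate orders are bounded; here they are, because only finitely many distinct values occur. Since $\gamma(\calV)$ is a subgroup of $\Pi(\calV)$, orders computed in $\Pi(\calV)$ agree with orders in $\gamma(\calV)$. Before starting, I will fix the standard fact about the dihedral presentation: in $\calD^{\{a,b\}}_n$ the generators $a,b$ have order exactly $2$ and $ab$ has order exactly $n$. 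This follows from the faithful action on a regular $n$-gon by two reflections whose axes meet at angle $\pi/n$. For $n=1$ the group collapses ($a=b$), but this case never arises, since the relevant index is $2m_{ij} \ge 2$. Likewise, $\calD^{\{a\}} \cong \calC_2$, so $a$ has order $2$.

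For part (a), I examine the coordinates of $[v]$. At each $\{i,j\}$ containing $v$, the coordinate is the generator $v$, which has order $2$: in $\calD^{\{v\}}$ directly, and in $\calD^{\{v,j\}}_{2m_{vj}}$ by the dihedral fact. At every other index the coordinate is $1$. Hence $[v]^2 = 1$. Moreover $[v] \ne 1$, because the coordinate at the singleton $\{v\}$ is $v \ne 1$. So the order is exactly $2$.

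For part (b), fix $u \ne v$ and compute $([u][v])(\{i,j\})$ by cases.

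If $\{i,j\} = \{u,v\}$, the coordinate is $uv \in \calD^{\{u,v\}}_{2m_{uv}}$, which has order exactly $2m_{uv}$.

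If $\{i,j\}$ contains exactly one of $u,v$ (this includes the singletons $\{u\}$ and $\{v\}$), the coordinate is a single generator, so its order is $2$.

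If $\{i,j\}$ contains neither $u$ nor $v$, the coordinate is $1$.

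Thus every coordinate order divides $2m_{uv}$, since $2 \mid 2m_{uv}$. Therefore $([u][v])^{2m_{uv}} = 1$. Conversely, if $([u][v])^n = 1$, then in particular the $\{u,v\}$-coordinate satisfies $(uv)^n = 1$, so $2m_{uv} \mid n$. The order is therefore exactly $2m_{uv}$.

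The only step with real content is the dihedral fact that $ab$ has order exactly $n$ in the presented group, rather than some proper divisor of $n$. Because the paper takes $\calD_n$ to be the dihedral group of order $2n$, I will simply invoke the geometric realization to settle this. Everything else is routine bookkeeping over the index set $I(V)$.
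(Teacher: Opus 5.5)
Your proposal is correct and follows the paper's proof: you compute $[v]$ and $[u][v]$ coordinatewise in $\Pi(\calV)$, get the upper bound because every coordinate order divides $2m_{uv}$, and get the lower bound from the $\{u,v\}$-coordinate $uv$, which has order exactly $2m_{uv}$ in $\calD^{\{u,v\}}$. The paper argues this last step by an inequality rather than by divisibility, and takes the dihedral fact and $[v]\ne 1$ for granted; your justifications of both are right, and your boundedness proviso is unnecessary, since ``$x^n=1$ iff $x(\{i,j\})^n=1$ for every coordinate'' holds in any direct product.
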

\begin{proof}
  $(a)$ Recall that $[v](\{i,j\}) = v$ if $v \in \{i,j\}$, otherwise $[v](\{i,j\}) = 1$.
  Therefore, $[v]^2(\{i,j\}) = v^2 = 1$ or $[v]^2(\{i,j\}) = 1^2 = 1$. Since $[v] \ne 1$,
  we get $\ord_{\gamma(\calV)}([v]) = 2$.

  $(b)$ Let $d = \ord_{\gamma(\calV)}([u][v])$.
  Note that $[u][v](\{u,v\}) = uv$, $[u][v](\{u,j\}) = u$ if $j \ne v$,
  $[u][v](\{i,v\}) = v$ if $i \ne u$, and $[u][v](\{i,j\}) = 1$ if $\{i,j\}\cap\{u,v\} = \0$.
  Therefore, $([u][v])^{2 m_{uv}} = 1$, so $d \le 2m_{uv}$.
  If $d < 2m_{uv}$ then the order of $uv$ in $\calD^{\{u,v\}}$ is $\le d < 2m_{uv}$,
  which is impossible. Therefore, $d = 2m_{uv}$.
\end{proof}

\begin{THM}\label{hall.thm.emb-ext}
  Let $L \subseteq \NN$ be a nonempty set of labels, and
  let $\calV$ and $\calW$ be complete $L$-edge-labelled graphs.
  For every embedding $f : \calV \to \calW$ there is an embedding
  $\gamma(f) : \gamma(\calV) \to \gamma(\calW)$ such that
  $\gamma(f)([v]) = [f(v)]$ for all $v \in V$.
  
  This turns $\gamma$ into a functor from the category of complete $L$-edge-labelled graphs and
  edge-labelled graph embeddings into the category of groups and group embeddings
  which takes finite complete $L$-edge-labelled graphs to finite groups.
\end{THM}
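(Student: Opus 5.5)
Given an embedding $f : \calV \to \calW$, the natural plan is to realize $\gamma(f)$ as the restriction of a homomorphism between the ambient products. We build a homomorphism $\Phi_f : \Pi(\calV) \to \Pi(\calW)$ that sends each $[v]$ to $[f(v)]$ and is injective on $\gamma(\calV)$. Since $f$ is injective, it induces an injection $I(f) : I(V) \to I(W)$, $\{i,j\} \mapsto \{f(i), f(j)\}$, which preserves singletons versus pairs. Because $f$ preserves labels, $m_{f(i)f(j)} = m_{ij}$, so each factor $\calD^{\{i,j\}}$ is canonically isomorphic to $\calD^{\{f(i),f(j)\}}$ via the generator renaming $i \mapsto f(i)$, $j \mapsto f(j)$. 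The same holds for the singleton factors $\calD^{\{i\}} \cong \calD^{\{f(i)\}}$. Call these isomorphisms $\sigma_{ij}$.

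Define $\Phi_f(x)(\{p,q\}) = \sigma_{ij}(x(\{i,j\}))$ when $\{p,q\} = \{f(i),f(j)\}$ lies in the image of $I(f)$. On every coordinate outside this image we have to decide how to define it, and here a naive choice ($1$ there) fails: then $\Phi_f([v])$ would differ from $[f(v)]$ on coordinates $\{f(v), w\}$ with $w \notin f(V)$. So the coordinates outside the image need a separate definition that still gives a homomorphism.

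The key fix is to handle each such coordinate $\{p,q\}$ with $q \notin f(V)$ as follows. If $p = f(v)$, we want the map on generators $[u] \mapsto [f(u)](\{p,q\})$, which sends $[v] \mapsto p$ and every other $[u] \mapsto 1$. This is exactly the composite of the projection $\Pi(\calV) \to \calD^{\{v\}} = \langle v\rangle$ (the singleton coordinate $\{v\}$, where $[u](\{v\}) = v$ iff $u = v$) with the inclusion $\langle v \rangle \to \calD^{\{p,q\}}$, $v \mapsto p$. That inclusion is a homomorphism because $p$ has order $2$ in $\calD^{\{p,q\}}$. Coordinates with $p, q \notin f(V)$ are set to $1$.

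With these choices $\Phi_f$ is a homomorphism, since every coordinate is a homomorphism (projection composed with a homomorphism of factors). A direct coordinate check then shows $\Phi_f([v]) = [f(v)]$. This is precisely where the singletons in $I(V)$ earn their place in the construction. Restricting to $\gamma(\calV)$ therefore gives a homomorphism onto $\langle [f(v)] : v \in V\rangle \le \gamma(\calW)$.

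Injectivity is where I expect the main care to be needed. The composite of $\Phi_f$ with the projection of $\Pi(\calW)$ onto the coordinates in $I(f)(I(V))$ is the product of the isomorphisms $\sigma_{ij}$, so it is an isomorphism $\Pi(\calV) \to \prod_{\{i,j\}\in I(V)} \calD^{\{f(i),f(j)\}}$. Hence $\Phi_f$ is injective on all of $\Pi(\calV)$, and in particular on $\gamma(\calV)$. The image of $\gamma(\calV)$ is generated by the $[f(v)]$, so it lies in $\gamma(\calW)$.

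Functoriality follows from uniqueness. A homomorphism on $\gamma(\calV)$ is determined by its values on the generators $[v]$, so $\gamma(g\circ f)$ and $\gamma(g)\circ\gamma(f)$ agree on generators and hence are equal, and $\gamma(\id) = \id$. Finiteness of $\gamma(\calV)$ for finite $\calV$ was already noted.
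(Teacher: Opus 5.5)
Your proposal is correct and follows the paper's proof. The paper builds the same coordinatewise homomorphism $\hat f : \Pi(\calV) \to \Pi(\calW)$: renaming isomorphisms on the coordinates $\{f(i),f(j)\}$, the singleton projection $\pi^{\{i\}}$ followed by the inclusion into $\calD^{\{f(i),q\}}$ when $q \notin f(V)$, and the trivial map elsewhere. It then proves injectivity by comparing coordinates $\{f(i),f(j)\}$, checks $\hat f([v]) = [f(v)]$ coordinatewise, and obtains functoriality from agreement on the generators.
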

\begin{proof}
  If $\calG, \calG_1, \ldots, \calG_n$ are groups and $h_i : \calG \to \calG_i$, $1 \le i \le n$, are homomorphisms,
  then by $[h_1, \ldots, h_n] : \calG \to \calG_1 \times \ldots \times \calG_n$ we denote the cotupling of
  $h_1, \ldots, h_n$ defined by $[h_1, \ldots, h_n](g) = (h_1(g), \ldots, h_n(g))$.
  
  Fix an embedding $f : \calV \to \calW$.
  For each $\{u, v\} \in I(V)$, let us introduce several easy homomorphisms:
  \begin{itemize}
  \item
    $e^{\{u,v\}}_v : \calD^{\{v\}} \to \calD^{\{u,v\}}$ is the embedding $1 \mapsto 1$, $v \mapsto v$;
  \item
    $s_f^{\{u,v\}} : \calD^{\{u,v\}} \to \calD^{\{f(u),f(v)\}}$ is the obvious isomorphism $u \mapsto f(u), v \mapsto f(v)$
  \item
    $\pi^{\{u, v\}} : \Pi(\calV) \to \calD^{\{u, v\}} : x \mapsto x(\{u,v\})$ is the canonical projection; and
  \item
    $1^{\{u,v\}} : \Pi(\calV) \to \calD^{\{u,v\}}$ is the constant homomorphism that takes $\Pi(\calV)$ to $1 \in \calD^{\{u,v\}}$.
  \end{itemize}
  Note that these notions make sense even in case $u = v$. Note, also, that $s_f^{\{u,v\}}$
  is an isomorphism for $u \ne v$ because $f$ is an embedding, so $m_{uv} = \ell^V(\{u,v\}) =
  \ell^W(\{f(u),f(v)\}) = m_{f(u)f(v)}$. Let
  $$
    \hat{f} = [\hat{f}^{\{p,q\}} : \{p,q\} \in I(W)],
  $$
  be the cotupling of morphisms $\hat{f}^{\{p,q\}}$ defined as follows:
  \begin{itemize}
  \item
    $\hat{f}^{\{f(i),f(j)\}} = s^{\{i,j\}}_f \circ \pi^{\{i,j\}}$,
  \item
    $\hat{f}^{\{f(i),q\}} = e^{\{f(i),q\}}_{f(i)} \circ s^{\{i\}}_f \circ \pi^{\{i\}}$ if $q \notin f(V)$,
  \item
    $\hat{f}^{\{p,q\}} = 1^{\{p,q\}}$ if $p \notin f(V)$ and $q \notin f(V)$,
  \item
    $\hat{f}^{\{f(i)\}} = s^{\{i\}}_f \circ \pi^{\{i\}}$, and
  \item
    $\hat{f}^{\{p\}} = 1^{\{p\}}$ if $p \notin f(V)$.
  \end{itemize}
  Being a cotupling of homomorphisms, $\hat{f} : \Pi(\calV) \to \Pi(\calW)$ is a homomorphism.

  Let us show that $\hat{f}$ is injective. Take any $x, y \in \Pi(\calV)$ such that $x \ne y$.
  then there is a coordinate $\{i,j\} \in I(V)$ such that $x(\{i,j\}) \ne y(\{i,j\})$.
  Then $\hat{f}(x)$ and $\hat{f}(y)$ differ on the coordinate $\{f(i),f(j)\} \in I(W)$:
  \begin{multline*}
    \hat{f}(x)(\{f(i),f(j)\}) = \hat{f}^{\{f(i),f(j)\}}(x) =
    s^{\{i,j\}}_f \circ \pi^{\{i,j\}}(x) = s^{\{i,j\}}_f(x(\{i,j\})) \ne\\
    \ne s^{\{i,j\}}_f(y(\{i,j\})) = s^{\{i,j\}}_f \circ \pi^{\{i,j\}}(y) = \hat{f}^{\{f(i),f(j)\}}(y) = \hat{f}(y)(\{f(i),f(j)\})
  \end{multline*}
  because $s^{\{i,j\}}_f$ is an injective. So, $\hat{f}$ is an embedding.
  
  Next, let us show that $\hat{f}([v]) = [f(v)]$ for all $v \in V$ by showing that
  $\hat{f}^{\{p,q\}}([v]) = [f(v)](\{p,q\})$ for every $\{p,q\} \in I(W)$. The proof
  reduces to a straightforward verification:
  \begin{itemize}
  \item
    $\{p,q\} = \{f(v),f(j)\}$ for some $j \in V$:
    $\hat{f}^{\{f(v),f(j)\}}([v]) = s^{\{v,j\}}_f([v](\{v,j\})) = s^{\{v,j\}}_f(v) = f(v) = [f(v)]^{\{f(v),f(j)\}}$
    (this argument also applies in case $j = v$);
  \item
    $\{p,q\} = \{f(i),f(j)\}$ for some $v \notin \{i,j\} \in I(V)$:
    $\hat{f}^{\{f(i),f(j)\}}([v]) = s^{\{i,j\}}_f([v](\{i,j\})) = s^{\{i,j\}}_f(1) = 1 = [f(v)]^{\{f(i),f(j)\}}$,
    since $f(v) \notin \{f(i),f(j)\}$
    (this argument also applies in case $i = j$);
  \item
    $\{p,q\} = \{f(v),q\}$ for some $q \notin f(V)$:
    $\hat{f}^{\{f(v),q\}}([v]) = e^{\{f(v),q\}}_{f(v)} \circ s^{\{v\}}_f([v]^{\{v\}}) =
    e^{\{f(v),q\}}_{f(v)} \circ s^{\{v\}}_f(v) = e^{\{f(v),q\}}_{f(v)}(f(v)) = f(v) =
    [f(v)]^{\{f(v),q\}}$;
  \item
    $\{p,q\} = \{f(i),q\}$ for some $i \ne v$ and $q \notin f(V)$:
    $\hat{f}^{\{f(i),q\}}([v]) = e^{\{f(i),q\}}_{f(i)} \circ s^{\{i\}}_f ([v]^{\{i\}})=
    e^{\{f(i),q\}}_{f(i)} \circ s^{\{i\}}_f (1) = 1 = [f(v)]^{\{f(i),q\}}$ since $f(v) \notin
    \{f(i), q\}$;
  \item
    $\{p,q\} \cap f(V) = \0$:
    $\hat{f}^{\{p,q\}}([v]) = 1^{\{p,q\}}([v]) = 1 =[f(v)]^{\{p,q\}}$ (that this argument
    also applies in case $p = q$).
  \end{itemize}

  Therefore, $\hat{f}$ takes generators of $\gamma(\calV)$ to generators of $\gamma(\calW)$,
  so the restriction $\hat{f}|_{\gamma(\calV)}$ is an embedding $\gamma(\calV) \hookrightarrow
  \gamma(\calW)$.

  Define $\gamma(f) = \hat{f}|_{\gamma(\calV)}$.

  The proof that $\gamma$ is functorial is now easy: $\gamma(\id_\calV) = \id_{\gamma(\calV)}$
  since $\gamma(\id_\calV)$ is the identity on the generators of $\gamma(\calV)$. Finally,
  let $f : \calU \hookrightarrow \calV$ and $g : \calV \hookrightarrow \calW$ be embeddings
  between $L$-edge labelled graphs. To show that $\gamma(g \circ f) = \gamma(g) \circ \gamma(f)$
  it suffices to show that $\gamma(g \circ f)([u]) = \gamma(g) \circ \gamma(f)([u])$
  for every $u \in U$, but that is easy: $\gamma(g)(\gamma(f)([u])) = \gamma(g)([f(u)]) =
  [g(f(u))] = \gamma(g \circ f)([u])$.
\end{proof}

Note that in case $L = \{1, 2\}$ the category of complete $L$-edge-labelled graphs and embeddings
is isomorphic to the category of simple graphs and graph embeddings, so as an immediate
corollary of the above theorem we have:

\begin{COR}
  There is a functor from the category of simple graphs and graph embeddings to the
  category of groups and group embeddings which takes finite graphs to finite groups.
\end{COR}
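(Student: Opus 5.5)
The corollary follows by composing $\gamma$ from Theorem~\ref{hall.thm.emb-ext}, taken with $L = \{1,2\}$, with an isomorphism of categories between simple graphs and complete $\{1,2\}$-edge-labelled graphs. The plan has three steps.

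\textbf{Step 1: the isomorphism $\Phi$.} To a simple graph $\calG = (V, E)$ assign the complete $\{1,2\}$-edge-labelled graph $\Phi(\calG) = (V, \ell)$ with
\[
  \ell(\{u,v\}) =
  \begin{cases}
    1, & \{u,v\} \in E,\\
    2, & \{u,v\} \notin E,
  \end{cases}
\]
and let $\Phi$ act as the identity on underlying maps. A map $f : V \to W$ is a graph embedding (injective, and $\{u,v\} \in E$ iff $\{f(u),f(v)\} \in E'$) exactly when it preserves the labels $\ell$. Injectivity needs a small check here: if $f(u) = f(v)$ for $u \ne v$, then $\ell^W(\{f(u), f(v)\})$ is not defined. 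So label preservation implicitly forces injectivity, and in any case the two notions of embedding coincide on injective maps. Hence $\Phi$ is a bijection on objects and on hom-sets, and it respects identities and composition because it is the identity on maps. Its inverse reads off the edges labelled $1$.

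\textbf{Step 2: composition.} Compose $\Phi$ with the functor $\gamma$ of Theorem~\ref{hall.thm.emb-ext} for $L = \{1,2\}$, which is a nonempty subset of $\NN$. The composite $\gamma \circ \Phi$ is a functor from simple graphs with graph embeddings to groups with group embeddings, since it is a composite of functors.

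\textbf{Step 3: finiteness.} If $\calG$ is finite, then $\Phi(\calG)$ is a finite complete labelled graph. By Theorem~\ref{hall.thm.emb-ext}, $\gamma(\Phi(\calG))$ is then finite, being a subgroup of a finite product of dihedral groups of orders $2$, $4$ and $8$.

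\textbf{Main obstacle.} There is essentially none. The only point needing care is Step 1, namely confirming that embeddings of complete labelled graphs are injective and that label preservation matches preservation of both edges and non-edges.
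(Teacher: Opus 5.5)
Your proposal is correct and follows essentially the paper's approach. The paper also derives the corollary from Theorem~\ref{hall.thm.emb-ext} with $L=\{1,2\}$, using the isomorphism between the category of simple graphs with embeddings and that of complete $\{1,2\}$-edge-labelled graphs. You have simply written that isomorphism out, including the check that label preservation forces injectivity, which the paper leaves implicit.
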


\section{Big Ramsey degrees in Hall's universal group}
\label{hall.sec.brd}

In this section our set of labels is the whole of $\NN$.
Recall that $H$ is the set of elements of the Hall's universal group $\calH$.
Let
$$
  \Gamma = \{a \in H : \ord_\calH(a) = 2\},
$$
and let us define an $\NN$-edge-labelled graph on $\Gamma$ as follows.
For a pair of distinct vertices $a, b \in \Gamma$:
\begin{itemize}
\item if $\langle a, b\rangle_\calH \cong \calD_{2m}$ for some $m \in \NN$, put an edge between $a$ and $b$
      and label the edge with $m$;
\item otherwise, $a$ and $b$ are non-adjacent in $\Gamma$.
\end{itemize}
Note that $\Gamma$ is indeed undirected since $a^2 = b^2 = 1$ means that
$ab$ and $ba$ have the same order.
Moreover, $\Gamma$ is a countably infinite, not necessarily complete $\NN$-edge-labelled graph
which is both vertex- and edge-transitive (recall that edges of $\Gamma$ are labelled).

\begin{LEM}
  $\calR_\NN \hookrightarrow \Gamma$.
\end{LEM}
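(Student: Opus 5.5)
We plan to build an embedding of $\calR_\NN$ into $\Gamma$ step by step, using the functor $\gamma$ from Theorem~\ref{hall.thm.emb-ext} together with the extension property of $\calH$. Here ``embedding into $\Gamma$'' means an injective map $\phi : R_\NN \to \Gamma$ such that for all distinct $u, v$ the elements $\phi(u)$ and $\phi(v)$ are adjacent in $\Gamma$ with label $\ell(\{u,v\})$, i.e.\ $\langle \phi(u), \phi(v)\rangle_\calH \cong \calD_{2m_{uv}}$. Since $\calR_\NN$ is complete, this is the only condition to check.

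Enumerate $R_\NN = \{v_1, v_2, \ldots\}$ and let $\calV_n$ be the finite complete edge-labelled subgraph on $\{v_1,\ldots,v_n\}$. Let $\iota_n : \calV_n \hookrightarrow \calV_{n+1}$ be the inclusion. Then $\gamma(\calV_n)$ is a finite group, and $\gamma(\iota_n)$ is a group embedding with $[v_i] \mapsto [v_i]$.

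We construct embeddings $g_n : \gamma(\calV_n) \hookrightarrow \calH$ with $g_{n+1} \circ \gamma(\iota_n) = g_n$ by induction.
\begin{itemize}
\item For $n=1$, $\gamma(\calV_1) \cong \calC_2$ is finite, so it embeds into $\calH$ by universality.
\item For the inductive step, apply the extension property of $\calH$ to $f = \gamma(\iota_n)$ and $g = g_n$ to get $g_{n+1}$.
\end{itemize}
Put $\phi(v_i) = g_n([v_i])$ for any $n \ge i$. This is well defined by the compatibility $g_{n+1}\circ\gamma(\iota_n) = g_n$.

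It remains to verify the edge condition. Fix $i \ne j$ and $n \ge i,j$. Since $g_n$ is an injective homomorphism, it preserves the subgroup $\langle [v_i],[v_j]\rangle$ up to isomorphism. By Lemma~\ref{hall.lem.orders}, $[v_i]$ and $[v_j]$ have order $2$ and their product has order $2m_{v_iv_j}$. A group generated by two distinct involutions whose product has order $k$ is dihedral of order $2k$, i.e.\ isomorphic to $\calD_k$. Hence $\langle \phi(v_i),\phi(v_j)\rangle_\calH \cong \calD_{2m}$ with $m = m_{v_iv_j}$. So $\phi(v_i) \in \Gamma$, and the pair is an edge labelled $m$.

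Two points need care.
\begin{itemize}
\item \emph{Injectivity of $\phi$.} Since $2m \ge 2$, the product $[v_i][v_j] \ne 1$, so $[v_i] \ne [v_j]$.
\item \emph{Well-definedness of the label.} The label is determined by the isomorphism type, since $\calD_{2m} \cong \calD_{2m'}$ forces $m = m'$ by comparing orders.
\end{itemize}
The only step that is more than bookkeeping is the dihedral identification: that two involutions $a \ne b$ with $\ord(ab) = k$ generate a copy of $\calD_k$. This is the standard fact that $\langle a,b\rangle$ is a quotient of $\calD_k$ of order at least $2k$, because $\langle ab\rangle$ has order $k$ and $a \notin \langle ab\rangle$. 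We will cite or briefly justify it.
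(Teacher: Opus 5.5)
Your proposal is correct and is essentially the paper's proof: you enumerate $\calR_\NN$, build compatible embeddings $\gamma(\calV_n)\hookrightarrow\calH$ via the extension property, and read off the vertex and edge data from Lemma~\ref{hall.lem.orders}. Your write-up is more careful than the paper's, which uses without comment the compatibility of the embeddings, injectivity, uniqueness of the label, and the identification $\langle a_i,a_j\rangle_\calH\cong\calD_{2m_{ij}}$ (the paper just says ``whence follows'').
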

\begin{proof}
  Enumerate vertices of $\calR_\NN$ as $v_1, v_2, \ldots$
  and let $\calV_n = \calR_\NN[v_1, \ldots, v_n]$ be the subgraph of $\calR_\NN$ induced by
  the set of vertices $\{v_1, \ldots, v_n\}$, $n \in \NN$.
  Each $\calV_n$ is a finite complete $\NN$-edge-labelled graph and the inclusion maps
  $\iota_n : \calV_n \to \calV_{n+1} : v_i \mapsto v_i$, $1 \le i \le n$, are embeddings.
  Clearly, $\calR_\NN = \bigcup_{n \in \NN} \calV_n$.
  
  Our aim is to build a copy of $\calR_\NN$ in $\Gamma$ via $\calH$ as follows.
  Fix an embedding $\phi_1 : \gamma(\calV_1) \to \calH$. Since $\calH$ is a \Fraisse\ limit,
  there exist embeddings $\phi_n : \gamma(\calV_n) \to \calH$, $n \ge 2$, such that the diagram below
  commutes:
  \begin{center}
    \begin{tikzcd}
      \gamma(\calV_1) \arrow[drrrr, bend right, "\phi_1"'] \arrow[r, "\gamma(\iota_1)"] & \gamma(\calV_2) \arrow[drrr, bend right=20, "\phi_2"'] \arrow[r, "\gamma(\iota_2)"] & \ldots & \gamma(\calV_n) \arrow[l, leftarrow, "\gamma(\iota_{n-1})"'] \arrow[r, "\gamma(\iota_n)"] \arrow[dr, bend right=15, "\phi_n"'] & \ldots \\
             &         &        &         & \calH
    \end{tikzcd}
  \end{center}
  Let $a_n = \phi_n([v_n]) \in H$, $n \in \NN$.
  By Lemma~\ref{hall.lem.orders} we have that $\ord_\calH(a_n) = \ord_{\gamma(\calV_n)}([v_n]) = 2$,
  so $a_n \in \Gamma$. Moreover, for all $i < j$ we have that
  $\ord_\calH(a_i a_j) = \ord_{\gamma(\calV_j)}([v_i][v_j]) = 2m_{ij}$, whence follows that
  $\langle a_i, a_j\rangle_\calH \cong \calD_{2m_{ij}}$.
  Therefore, the subgraph of $\Gamma$ induced by $\{a_n : n \in \NN\}$ is isomorphic to
  $\calR_\NN$ and the isomorphism is given by $v_i \mapsto a_i$, $i \in \NN$.
\end{proof}

\begin{LEM}\label{hall.lem.edge-inf}
  Let $e$ be any edge of $\Gamma$. Then $T(e, \Gamma) = \infty$.
\end{LEM}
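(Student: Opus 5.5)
We plan to get Lemma~\ref{hall.lem.edge-inf} from Theorem~\ref{hall.thm.hubicka-RL} by transfer along the inclusion of complete $\NN$-edge-labelled graphs into all $\NN$-edge-labelled graphs, using Theorem~\ref{bigrd.thm.1}.

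\emph{Setup.} Let $\BB$ be the category of (at most countable) complete $\NN$-edge-labelled graphs with embeddings. Let $\CC$ be the category of (at most countable) $\NN$-edge-labelled graphs with embeddings, i.e.\ injective maps preserving adjacency, non-adjacency and labels. Let $G : \BB \to \CC$ be the inclusion functor. It is clearly faithful, and all morphisms in both categories are injective, hence mono. Local finiteness is checked exactly as in the standard setting of first-order structures.

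Take $B = \calR_\NN$, which is universal for $\BB$, and $C = \Gamma$. By the previous lemma $\calR_\NN \hookrightarrow \Gamma$, so $\Gamma$ embeds every countable complete $\NN$-edge-labelled graph and is therefore universal for $G(\BB)$. Since $\Gamma$ is edge-transitive with respect to labelled edges, we may let $A = e$ be a two-vertex complete graph carrying the same label as the given edge; then $G(A) = e$.

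\emph{Amalgamation condition.} Let $F : \Delta \to \BB$ be an $(e, \calR_\NN)$-diagram, and suppose $GF$ has a compatible cocone in $\CC$ with tip $\Gamma$. Write $w_1, \ldots, w_n : \calR_\NN \hookrightarrow \Gamma$ for its legs at the top row. We build a complete graph $\calS$ on the vertex set $S = \bigcup_i w_i(\calR_\NN) \subseteq \Gamma$ as follows.
\begin{itemize}
\item If two vertices of $S$ are adjacent in $\Gamma$, give them their $\Gamma$-label.
\item If they are non-adjacent in $\Gamma$, give them an arbitrary fixed label, say~$1$.
\end{itemize}
Each $w_i$ is an embedding into $\Gamma$ and $\calR_\NN$ is complete, so every pair inside $w_i(\calR_\NN)$ is an edge of $\Gamma$ carrying the correct label. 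Hence each $w_i$, viewed as a map $\calR_\NN \to \calS$, is an embedding of complete labelled graphs. Commutativity of the cocone is unaffected, since the underlying maps are unchanged. Thus $F$ has a solution in $\BB$ with tip $\calS$. If one needs the tip to be $\calR_\NN$, embed the countable complete graph $\calS$ into $\calR_\NN$ by universality.

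\emph{Conclusion.} Theorem~\ref{bigrd.thm.1} now gives $T_\BB(e, \calR_\NN) \le T_\CC(e, \Gamma) = T(e, \Gamma)$, and the left-hand side is infinite by Theorem~\ref{hall.thm.hubicka-RL}. Hence $T(e, \Gamma) = \infty$.

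The main point to verify is the compatibility step. A pair of vertices may lie in the images of several $w_i$, so we must check that the labels assigned to it are consistent. This holds because every label is read off from $\Gamma$ itself, which is exactly why the tip of the cocone is required to be $\Gamma$. The only residual care is to confirm that the hypotheses of Theorem~\ref{bigrd.thm.1} on local finiteness hold for these categories in the sense of~\cite{masul-bigrd}. That should be routine.
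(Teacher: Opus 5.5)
Your proposal is correct and is essentially the paper's proof: you use the inclusion functor and Theorem~\ref{bigrd.thm.1}, and you verify the amalgamation hypothesis by labelling the non-edges of $\Gamma$ by $1$. The differences are cosmetic: the paper completes all of $\Gamma$ to $\Gamma^*$ while you complete only the union of the images, and you take all countable (complete) labelled graphs as objects rather than induced subgraphs of $\calR_\NN$ and $\Gamma$. Your appeal to edge-transitivity and your worry about label consistency are unnecessary, since $T(e,\Gamma)$ depends only on the isomorphism type of $e$ and every label of $\calS$ is read directly from $\Gamma$.
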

\begin{proof}
  We apply Theorem~\ref{bigrd.thm.1} to Theorem~\ref{hall.thm.hubicka-RL}.
  Let $\BB$ be the category whose objects are all finite
  and countably infinite induced subgraphs of $\calR_\NN$ together with embeddings, and let
  $\CC$ be the category whose objects are all finite
  and countably infinite induced subgraphs of $\Gamma$ together with embeddings.
  Because $\calR_\NN \hookrightarrow \Gamma$ we have that $\BB$ is a full subcategory of $\CC$,
  so in the context of Theorem~\ref{bigrd.thm.1} let $G : \BB \to \CC$ be the inclusion functor
  $G(A) = A$ and $G(f) = f$. This is clearly a faithful functor.
  
  Fix an edge $e$ of $\Gamma$. Let $F : \Delta \to \BB$ be an $(e, \calR_\NN)$-diagram and assume that
  $GF : \Delta \to \CC$ has a solution in $\CC$ whose tip is $\Gamma$. Let $\Gamma^*$ be
  the graph constructed from $\Gamma$ as follows: for every pair of non-adjacent vertices in $\Gamma$
  add an edge to $\Gamma^*$ between them labelled by~1. Then replacing $\Gamma$ with $\Gamma^*$ in the
  tip of the solution of the diagram $GF : \Delta \to \CC$ still works with the same embeddings.
  Since $\Gamma^*$ is a countable $\NN$-edge-labelled graph, it belongs to $\BB$, so this is the solution of
  $F$ in $\BB$. So, the Theorem~\ref{bigrd.thm.1} applies and $T_\BB(e, \calR_\NN) \le T_\CC(e, \Gamma)$.
  By Theorem~\ref{hall.thm.hubicka-RL} we have that $T_\BB(e, \calR_\NN) = \infty$. Therefore,
  $T_\CC(e, \Gamma) = \infty$.
\end{proof}

\begin{THM}\label{hall.thm.main}
  $T(\calD_{2m}, \calH) = \infty$, for every $m \in \NN$. Consequently,
  the Hall's universal group does not have finite big Ramsey degrees.
\end{THM}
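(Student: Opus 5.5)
The plan is to transfer Lemma~\ref{hall.lem.edge-inf} from $\Gamma$ to $\calH$ using Theorem~\ref{hall.thm.prec-T}. Fix $m \in \NN$ and let $e$ be the edge of $\Gamma$ consisting of two vertices $u, v$ joined by an edge labelled $m$. Such an edge exists, since $\calD_{2m}$ embeds into $\calH$ and its two standard generators are involutions whose product has order $2m$. Write $\calD_{2m} = \calD^{\{a,b\}}_{2m}$ with its standard generators $a, b$. Let $\AAA$ be the category $\CC$ from the proof of Lemma~\ref{hall.lem.edge-inf}, and let $\BB$ be the category of countable groups and group embeddings. The goal is to show
$$
  (e, \Gamma)_\AAA \prec (\calD_{2m}, \calH)_\BB .
$$
Then Theorem~\ref{hall.thm.prec-T} and Lemma~\ref{hall.lem.edge-inf} give $\infty = T(e,\Gamma) \le T(\calD_{2m}, \calH)$.

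To witness $\prec$, take $M = \Emb(\calD_{2m}, \calH)$, and define $\phi : M \to \Emb(e, \Gamma)$ by letting $\phi(f)$ be the map $u \mapsto f(a)$, $v \mapsto f(b)$.

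This is well defined. Since $f$ is injective, $f(a)$ and $f(b)$ are distinct elements of order~2, so they lie in $\Gamma$. Moreover $\langle f(a), f(b)\rangle_\calH = f(\calD_{2m}) \cong \calD_{2m}$, so the edge $\{f(a), f(b)\}$ carries label $m$. In fact $\phi$ is surjective. If $(x, y)$ spans an edge labelled $m$ in $\Gamma$, then $\langle x, y\rangle_\calH \cong \calD_{2m}$ with $x^2 = y^2 = 1$. Hence $\ord(xy) = 2m$, because the order of $xy$ determines the size of the dihedral group $\langle x, y\rangle$. So $a \mapsto x$, $b \mapsto y$ extends to an embedding $\calD_{2m} \to \calH$.

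Next, fix an arbitrary $h \in \hom(\calH, \calH)$, that is, a group embedding $\calH \hookrightarrow \calH$. Let $g = h|_\Gamma$. The key check is that $g \in \hom(\Gamma, \Gamma)$, and I expect this to be the only point needing care. Because $h$ is an injective homomorphism:
\begin{itemize}
\item it preserves the property of having order exactly $2$, so $g$ maps $\Gamma$ into $\Gamma$;
\item it maps $\langle x, y\rangle_\calH$ isomorphically onto $\langle h(x), h(y)\rangle_\calH$, so adjacency, non-adjacency and labels are all preserved;
\item $g$ is injective.
\end{itemize}
Thus $g$ is an embedding of the (non-complete) labelled graph $\Gamma$ into itself. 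This is exactly the notion of morphism used in $\CC$ in Lemma~\ref{hall.lem.edge-inf}.

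Finally, for any $t \in \Emb(e, \Gamma)$ write $t = \phi(f)$ using surjectivity. Then
$$
  g \circ t = \phi(h \circ f) \in \phi\bigl(M \cap h \cdot \hom(\calD_{2m}, \calH)\bigr),
$$
which is the required inclusion. This establishes $\prec$, and hence $T(\calD_{2m}, \calH) = \infty$.

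For the ``consequently'' part, note that $\calD_{2m}$ is a finite, hence finitely generated, subgroup of $\calH$ by universality. Therefore $\calH$ does not have finite big Ramsey degrees.
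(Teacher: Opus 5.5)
Your proof is correct and follows the paper's argument essentially verbatim. It uses the same $M = \Emb(\calD_{2m}, \calH)$, the same $\phi$ reading off the images of the two generators, and the same $g = h|_\Gamma$, and then concludes via Theorem~\ref{hall.thm.prec-T} and Lemma~\ref{hall.lem.edge-inf}. Your explicit check that $\phi$ is surjective is a genuine improvement: two distinct involutions $x, y$ generate a dihedral group of order $2\,\ord_\calH(xy)$, so $a \mapsto x$, $b \mapsto y$ extends to an embedding. The paper only asserts this step when it defines $f'$.
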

\begin{proof}
  Take any $m \in \NN$, and any $a, b \in H$ such that $\langle a, b\rangle_\calH \cong \calD_{2m}$.
  Let us show that $T(\langle a, b\rangle_\calH, \calH) = \infty$.

  Let $e$ be the edge $\{a, b\}$ of $\Gamma$. By construction of $\Gamma$ the edge $e$
  is labelled by $m$. Let $\CC$ be the category as in Lemma~\ref{hall.lem.edge-inf},
  and let $\DD$ be the category of finite and countably infinite locally finite groups with embeddings.
  
  Let us show that
  $(e, \Gamma)_\CC \prec (\langle a, b\rangle_\calH, \calH)_\DD$.
  Let $M = \Emb(\langle a, b\rangle_\calH, \calH)$ and let $\phi : M \to \Emb(e, \Gamma)$ be the
  function that takes an embedding $f : \langle a, b\rangle_\calH \hookrightarrow \calH$
  to the embedding $f' : e \hookrightarrow \Gamma : a \mapsto f(a), b \mapsto f(b)$.
  Take any $h \in \Emb(\calH, \calH)$ and let $g = h|_\Gamma$. Clearly, $g \in \Emb(\Gamma, \Gamma)$.
  Let us show that
  $$
    g \circ \Emb(e, \Gamma) \subseteq \phi(h \circ \Emb(\langle a, b\rangle_\calH, \calH)).
  $$
  Let $f$ be an embedding of $e$ into $\Gamma$. Then $\{f(a), f(b)\}$ is an edge in $\Gamma$
  labelled by~$m$. By the definition of $\Gamma$ we then have that $\langle f(a), f(b)\rangle_\calH
  \cong \calD^{(ab)}_{2m}$. Let $f' : \langle a, b\rangle_\calH \hookrightarrow \calH : a \mapsto f(a), b \mapsto f(b)$.
  Then $\phi(f')$ is the embedding $e \hookrightarrow \Gamma$ that takes $a$ to $h(f'(a))$ and $b$ to
  $h(f'(b))$. Note that $h(f'(a)) = g(f(a))$ and $h(f'(b)) = g(f(b))$. Therefore, $g \circ f = \phi(h \circ f')$.
  This completes the proof that $(e, \Gamma)_\CC \prec (\langle a, b\rangle_\calH, \calH)_\DD$.
  
  By Theorem~\ref{hall.thm.prec-T} we now have $T_\CC(e, \Gamma) \le T_\DD(\langle a, b\rangle_\calH, \calH)$.
  Since $T_\CC(e, \Gamma) = \infty$, we conclude that $T_\DD(\langle a, b\rangle_\calH, \calH) = \infty$.
\end{proof}

\begin{COR}\label{hall.cor.main}
  $(a)$ Let $\calG$ be a finite group such that $\calD_{2m} \le \calG$ for some $m \in \NN$. Then
  $T(\calG, \calH) = \infty$.
  
  $(b)$ If $\calC_2 \times \calC_2 \le \calG$ then
  $T(\calG, \calH) = \infty$, where $\calC_n$ denotes the cyclic group of order~$n$.
  
  $(c)$ If $\calA = \calC_{n_1} \times \ldots \times \calC_{n_k}$ is a finite abelian group such that
  at least two of the integers $n_1$, \ldots, $n_k$ are even, then $T(\calA, \calH) = \infty$.
  
  $(d)$ $T(\calS_n, \calH) = \infty$ for all $n \ge 3$, where $\calS_n$ denotes the symmetric group.
\end{COR}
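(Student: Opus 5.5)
Part $(a)$ will be derived from Theorem~\ref{hall.thm.main} by monotonicity, using Theorem~\ref{rdbas.thm.mon-bigdeg}; parts $(b)$--$(d)$ will then be reduced to $(a)$.

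\emph{Part $(a)$.} We work in the category $\DD$ of finite and countably infinite locally finite groups with embeddings, where every morphism is mono. Let $\calD_{2m} \le \calG$, so $\calD_{2m} \to \calG$. It remains to check that $\calH$ is weakly homogeneous for $(\calD_{2m}, \calG)$.

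\begin{itemize}
\item Let $f : \calD_{2m} \hookrightarrow \calG$ be the inclusion and take $g = \id_\calH$.
\item Let $u : \calD_{2m} \hookrightarrow \calH$ be any embedding.
\item By the extension property of $\calH$ there is $u' : \calG \hookrightarrow \calH$ with $u' \circ f = u$.
\item Hence $\Emb(\calD_{2m}, \calH) \subseteq \Emb(\calG, \calH) \circ f$, which is weak homogeneity.
\end{itemize}

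Theorem~\ref{rdbas.thm.mon-bigdeg} now gives $\infty = T(\calD_{2m}, \calH) \le T(\calG, \calH)$.

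\emph{Parts $(b)$--$(d)$.} Each is a group-theoretic observation.

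\begin{itemize}
\item For $(b)$: $\calC_2 \times \calC_2$ is the dihedral group of order~$4$, generated by two commuting involutions $a, b$ with $\ord(ab) = 2$. In the paper's indexing this is $\calD_{2m}$ with $m = 1$, so $(b)$ follows from $(a)$ with $m=1$.
\item For $(c)$: if $n_i$ and $n_j$ are even with $i \ne j$, the elements of order~$2$ in the factors $\calC_{n_i}$ and $\calC_{n_j}$ generate a copy of $\calC_2 \times \calC_2$, and $(b)$ applies.
\item For $(d)$: for $n \ge 4$, $\calS_n$ contains $\langle (12), (34)\rangle \cong \calC_2 \times \calC_2$, and $(b)$ applies. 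For $n = 3$, $\calS_3$ is dihedral of order~$6$, i.e.\ generated by involutions whose product has order~$3$.
\end{itemize}

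The main obstacle is $n=3$ in part~$(d)$, because of an indexing mismatch. The paper writes $\calD_n$ for the dihedral group on $2n$ elements, and Theorem~\ref{hall.thm.main} only covers $\calD_{2m}$, i.e.\ products of the generators of even order $2m$. $\calS_3$ has order~$6$ and contains no dihedral subgroup $\calD_{2m}$ (it has no subgroup of order $4m$), so it cannot be obtained from $(a)$.

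I would handle this by rerunning the argument of Lemma~\ref{hall.lem.edge-inf} and Theorem~\ref{hall.thm.main} with odd labels allowed. Define the edge label on $\Gamma$ via $\langle a, b\rangle \cong \calD_m$ for all $m \ge 2$. The encoding of Lemma~\ref{hall.lem.orders} then needs $\ord([u][v]) = m_{uv}$ instead of $2m_{uv}$, so the factors $\calD^{\{u,v\}}$ must be $\calD_{m_{uv}}$ rather than $\calD_{2m_{uv}}$. The difficulty is that when $m_{uv}$ is odd, the coordinates $\{u,j\}$ make the order of $[u][v]$ $\mathrm{lcm}(2, m_{uv})$, not $m_{uv}$.

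If this cannot be repaired, I would restrict the $d$ statement to $n \ge 4$ via $(b)$.
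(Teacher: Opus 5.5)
\textbf{Verdict.} Your proofs of $(a)$, $(b)$, $(c)$ and of $(d)$ for $n\ge 4$ match the paper's. The case $n=3$ of $(d)$ is a genuine gap in your proposal. However, your diagnosis of it is correct, and the paper's proof has the same gap.

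\textbf{Parts $(a)$--$(c)$ and $(d)$ for $n\ge 4$.} Part $(a)$ is Theorem~\ref{rdbas.thm.mon-bigdeg} combined with Theorem~\ref{hall.thm.main}, with weak homogeneity coming from the extension property of $\calH$. You simply make explicit the choice $g=\id_{\calH}$ that the paper leaves implicit. The remaining parts reduce to $(a)$ via $\calD_2\cong\calC_2\times\calC_2$.

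\textbf{The case $n=3$.} The paper handles it by asserting $\calS_3\cong\calD_6$ and appealing to $(a)$. This clashes with its own convention that $\calD_n$ has $2n$ elements: $\calD_6$ has order $12$, whereas $\calS_3$ is $\calD_3$, whose generating involutions have a product of odd order $3$. Since $|\calS_3|=6$, it contains neither a $\calD_{2m}$ (of order $4m$) nor $\calC_2\times\calC_2$. Theorem~\ref{rdbas.thm.mon-bigdeg} only pushes $T=\infty$ from a subgroup up to a supergroup, so $\calS_3\le\calD_6$ gives nothing. As written, neither your proposal nor the paper establishes $T(\calS_3,\calH)=\infty$; what is actually proved is $(d)$ for $n\ge 4$.

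\textbf{What is missing.} You need an odd-label version of the lemma $\calR_{\NN}\hookrightarrow\Gamma$. That is, a countably infinite label set $L\ni 3$ and involutions $a_v\in H$, one for each vertex $v$ of $\calR_L$, with $\ord_{\calH}(a_ua_v)=m_{uv}$ exactly. Given this, the proof of Lemma~\ref{hall.lem.edge-inf} and the $\prec$-argument of Theorem~\ref{hall.thm.main} go through with only notational changes. One uses the graph on the involutions of $\calH$ in which $\{a,b\}$ is labelled $\ord_{\calH}(ab)$ whenever this lies in $L$. This works because two involutions whose product has order $3$ generate a copy of $\calS_3$.

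You are right that the product-of-dihedral-groups design of $\gamma$ cannot be adapted. In every coordinate where exactly one of $[u],[v]$ is nontrivial, their product is an involution, which forces a factor $2$ into the order. The generators have to interact in every coordinate.

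\textbf{A construction that appears to supply the missing lemma.} One candidate is a finitary reflection representation; I sketch it below, and it should be checked before being relied on.
\begin{itemize}
\item Take $K=\overline{\mathbb{F}}_5$ and $L=\{m\ge 3 : 5\nmid m\}$, and fix for each $m\in L$ a primitive $2m$-th root of unity $\zeta_m\in K$.
\item Let $B$ be the symmetric bilinear form on $\bigoplus_v Ke_v$ with $B(e_u,e_u)=1$ and $B(e_u,e_v)=-(\zeta_m+\zeta_m^{-1})/2$ for $m=m_{uv}$. Put $s_u(x)=x-2B(e_u,x)e_u$.
\item $B$ is nondegenerate on $\mathrm{span}(e_u,e_v)$. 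There $s_us_v$ has the distinct eigenvalues $\zeta_m^{\pm 2}$, each of order $m$, and it acts trivially on the $B$-orthogonal complement. Hence $\ord(s_us_v)=m_{uv}$.
\item Every element of $\langle s_{u_1},\dots,s_{u_k}\rangle$ has the form $x\mapsto x+\sum_{i,j}c_{ij}B(e_{u_j},x)e_{u_i}$, with $c_{ij}$ in the finite field generated by the $B(e_{u_i},e_{u_j})$. So the group generated by all $s_u$ is countable and locally finite, and therefore embeds into $\calH$.
\end{itemize}
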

\begin{proof}
  $(a)$ follows from Theorems~\ref{rdbas.thm.mon-bigdeg} and~\ref{hall.thm.main}, and the fact that $\calH$ is
  ultrahomogeneous.
  
  $(b)$ follows from $(a)$ and $\calD_2 \cong \calC_2 \times \calC_2$.

  $(c)$ follows $(b)$.
  
  $(d)$ follows from $(a)$ and $(b)$ since $\calS_3 \cong \calD_6$ and $\calC_2 \times \calC_2 \le \calS_n$
  for $n \ge 4$.
\end{proof}

\paragraph{Open problem.} Is there a finite group $\calG \ne 1$ with $T(\calG, \calH) < \infty$?
In particular, is it true that $T(\calC_n, \calH) < \infty$ for all $n \ge 2$?



\section*{Acknowledgements}

The initial version of this paper was written during our stay at the XXXI Midsummer Combinatorial Workshop
in Prague, Czech Republic, organized in honor of Jaroslav Ne\v set\v ril's 80th birthday.
We would like to thank the organizers, and in particular Jan Jubi\v cka for many
fruitful discussions and great hospitality.

The first author gratefully acknowledges the financial support of the Ministry of Science,
Technological Development and Innovation of the Republic of Serbia
(Grants No. 451-03-33/2026-03/ 200125 \& 451-03-34/2026-03/ 200125).

The second author gratefully acknowledges
funding by the Deutsche Forschungsgemeinschaft (DFG, German Research
Foundation) under Germany's Excellence Strategy -- The Berlin Mathematics
Research Center MATH+ (EXC-2046/1, EXC-2046/2, project ID: 390685689).

\end{document}